\numberwithin{equation}{section}
\newtheorem{theorem}{Theorem}[section]
\newtheorem{proposition}[theorem]{Proposition}
\newtheorem{lemma}[theorem]{Lemma}
\newtheorem{corollary}[theorem]{Corollary}
\theoremstyle{definition}
\newtheorem{definition}[theorem]{Definition}
\newtheorem{question}[theorem]{Question}
\theoremstyle{remark}
\newtheorem{remark}[theorem]{Remark}
\newtheorem{example}[theorem]{Example}
\newcommand{\G}{\mathcal{G}}
\newcommand{\src}{\mathbf{d}}
\newcommand{\so}{\mathbf{s}}
\newcommand{\ran}{\mathbf{r}}
\newcommand{\one}{\mathbf{1}}
\newcommand{\supp}{\operatorname{supp}}
\newcommand{\pth}{\operatorname{Path}}
\date{\today}
\title[Maximal commutative subalgebras of Steinberg algebras]{Maximal commutative subalgebras of Steinberg algebras}
\author[A.\ Cichocka]{Anna Cichocka}
\author[Z.\ Mesyan]{Zachary Mesyan}
\author[M.\ Ziembowski]{Micha\l{} Ziembowski}
\address{Anna Cichocka and Micha\l{} Ziembowski: Faculty of Mathematics and Information Science, Technical University of Warsaw, 00-661 Warsaw, Poland}
\email{a.cichocka@mini.pw.edu.pl}
\email{m.ziembowski@mini.pw.edu.pl}
\address{Zachary Mesyan: Department of Mathematics, University of Colorado, Colorado Springs, CO 80918, USA}
\email{zmesyan@uccs.edu}
\subjclass[2020]{22A22, 18B40, 16S36, 16S88}
\keywords{maximal commutative subalgebra, Steinberg algebra, Leavitt path algebra}
\begin{document}

\begin{abstract}
We construct large classes of maximal commutative subalgebras in prime Steinberg algebras, generalizing a known result for Leavitt path algebras.
\end{abstract}

\maketitle

\section{Introduction} 

There is a long history of studying maximal commutative subalgebras of various algebras, as a means of understanding their structure. For example, in a finite-dimensional semisimple Lie algebra over a complex number field, maximal commutative subalgebras are precisely the Cartan subalgebras~\cite{dynkin, malcev}. For another example, according to a classical theorem of Jacobson~\cite{jacobson}, which generalizes an earlier result of Schur~\cite{schur} for algebraically closed fields, a commutative subalgebra of a matrix ring $\mathbb{M}_n(K)$, over a field $K$, must have dimension at most $\lfloor n^2/4 \rfloor + 1$. Moreover, this bound is sharp (see~\cite{SBWZ} for more details). Among other settings, maximal commutative algebras have also been examined in Weyl algebras~\cite{bavula}, Leavitt path algebras~\cite{BvWZ_prime, BCvWZ}, Kumjian--Pask algebras~\cite{clark-canto-isfahani}, associative superalgebras~\cite{ELS}, $C^*$-algebras~\cite{KW,karamzadeh}, and Malcev algebras~\cite{elduque}.

In this note we construct maximal commutative subalgebras in certain Steinberg algebras. These algebras were first introduced by Steinberg~\cite{steinberg0} and Orloﬀ Clark/Farthing/Sims/Tomforde~\cite{SteinbergIntro}. They are algebraic analogues of groupoid $C^*$-algebras, developed by Renault~\cite{renault}, and constitute a very general class of rings, which includes groupoid algebras, inverse semigroup algebras, and Leavitt path algebras (which, in turn, include matrix and Laurent polynomial algebras~\cite{LPAbook}). Since their introduction, Steinberg algebras have received considerable attention in the literature. Most relevantly to the present context, Hazrat and Li~\cite{Hazrat-Li} constructed maximal commutative subalgebras of Steinberg algebras whose isotropy groups have abelian interiors. See~\cite{lisarooz} for an overview and more details about the history of Steinberg algebras.

Given a (discrete) commutative unital ring $R$ and a topological groupoid $\G$, with certain additional properties, a Steinberg algebra $A_R(\G)$ consists of continuous functions $\G \to R$ with compact support. (See Section~\ref{prelim-section} for more details.) Our main result (Theorem~\ref{steinbergtheorem}) is that if $A_R(\G)$ is prime, and $U_1 \cup U_2$ is a partition of the unit space (set of objects) of $\G$ into open sets, then
\[
\mathcal{Z}(A_R(G)) + \{f \in A_R(\G) \mid f(x) = 0 \text{ for all } x \notin U_2\G U_1 \}
\]
is a maximal commutative subalgebra of $A_R(\G)$, where $\mathcal{Z}(A_R(G))$ denotes the center. Specializing this to Leavitt path algebras (Theorem~\ref{LPAtheorem} and Corollary~\ref{LPA-cor}) gives more general versions of the main result of~\cite{BvWZ_prime}. Along the way, we show that a prime Steinberg algebra $A_R(\G)$ has a trivial center, whenever $\G$ is a groupoid whose unit space is not compact and has at least two elements (Proposition~\ref{steinberg-center}).

\section{Preliminaries} \label{prelim-section}

We begin by recalling basic concepts and results pertaining to Steinberg algebras.

\subsection{Groupoids}

Let $\G$ be a \emph{groupoid}, i.e., a small category in which every morphism is invertible. We denote the set of objects of $\G$, also known as the \textit{unit space}, by $\G^{(0)}$, and we identify these objects with the corresponding identity morphisms. For each morphism $x \in \G$, the object $\src(x):=x^{-1}x$ is the \textit{domain} of $x$, and $\ran(x):=xx^{-1}$ is its \emph{range}. Two morphisms $x$ and $y$ are composable as $xy$ if and only if $\src(x)=\ran(y)$. For all $X,Y \subseteq \G$, we define
\begin{equation}\label{setmult}
XY :=\big \{xy \mid x \in X, y \in Y \text{ such that } \src(x)=\ran(y) \big\},
\end{equation}
and
\begin{equation}\label{setinv}
X^{-1} :=\big \{x^{-1} \mid x \in X \big\}.
\end{equation}

A \textit{topological groupoid} is a groupoid (whose set of morphisms is) equipped with a topology making inversion and composition continuous. A topological groupoid $\G$ is \textit{\'etale} if $\G^{(0)}$ is locally compact and Hausdorff in the topology induced by that on $\G$, and $\src: \G \to \G^{(0)}$ is a local homeomorphism. In this situation $\ran: \G \to \G^{(0)}$ is necessarily also a local homeomorphism~\cite[p.\ 29]{rigby}. An \'etale groupoid $\G$ is \textit{topologically transitive} if $\src^{-1}(U)\cap \ran^{-1}(V) \neq \emptyset$ for all nonempty open $U, V \subseteq \G^{(0)}$. 

An open subset $X$ of an \'etale groupoid $\G$ is called a \textit{slice} or an \textit{open bisection} if the restrictions $\src|_{X}$ and $\ran|_{X}$ are injective (and hence are homeomorphisms onto their images). The collection of all slices forms a base for the topology of an \'etale groupoid $\G$, and $\G^{(0)}$ is a slice which is also closed~\cite[p.\  29]{rigby}. An \'etale groupoid is \textit{ample} if the compact slices form a base for its topology. The set $\G^{co}$ of all compact slices of an ample groupoid $\G$ is an inverse semigroup, under the operations given in equations (\ref{setmult}) and (\ref{setinv}) \cite[Proposition 2.2]{rigby}. (Recall that a semigroup $S$ is an \textit{inverse semigroup} if for each $s \in S$ there is a unique $s^{-1} \in S$ satisfying $s = ss^{-1}s$ and $s^{-1} = s^{-1}ss^{-1}$.)

\subsection{Steinberg algebras}

Let $\G$ be a Hausdorff ample groupoid, let $R$ be a commutative unital ring, and let $R^{\G}$ denote the set of all continuous functions $\G \to R$, where $R$ is viewed as a discrete topological space. For each $f \in R^{\G}$, let $\supp(f) := \{x \in \G \mid f(x) \neq 0\}$, and let 
\[A_R(\G): = \{f \in R^{\G} \mid \supp(f) \text{ is compact}\}.\]
For all $f,g \in A_R(\G)$ and $z \in \G$, define the \textit{convolution} by
\[f*g(z) := \sum_{xy = z}f(x)g(y).\]
Using the fact that $\G$ is \'etale, one can show that this sum is always finite, and so the operation is well-defined--see~\cite[Proposition 2.4]{rigby} or~\cite[Proposition 4.5]{steinberg0}. With pointwise addition and scalar multiplication, and convolution as multiplication, $A_R(\G)$ becomes an $R$-algebra, called a \textit{Steinberg algebra}.

For each $B \in \G^{co}$, let $\one_B : \G \to \{0,1\}$ denote the \textit{characteristic} function of $B$ (i.e., $\one_B(x) = 1$ if $x \in B$, and $\one_B(x) = 0$ otherwise). Since $B$ is compact, $\one_B \in A_R(\G)$. It can be shown that $A_R(\G)$ is spanned by $\{\one_B \mid B \in \G^{co}\}$ as an $R$-algebra~\cite[Corollary 2.2]{rigby}. In particular, $\supp(f)$ is open for any $f\in A_R(\G)$. It is also easy to see that $(\one_B*\one_D =) \, \one_B\one_D = \one_{BD}$, for all $B,D \in \G^{co}$. More generally, for all $x \in \G$, $f \in A_R(\G)$, and $B \in \G^{co}$, we have
\begin{equation} \label{setofeq1}
f\one_B(x) = \begin{cases} f(xy^{-1}) & \text{if } \src(y) = \src(x) \text{ for some (unique) } y\in B \\ 0 & \text{if } \src(x) \not\in\src(B) \end{cases}
\end{equation}
and
\begin{equation} \label{setofeq2}
\one_Bf(x) = \begin{cases} f(y^{-1}x) & \text{if } \ran(y) = \ran(x) \text{ for some (unique) } y\in B \\0 & \text{if } \ran(x) \not\in\ran(B). \end{cases}
\end{equation}
We refer the reader to~\cite{rigby} or~\cite{steinberg0} for additional details about Steinberg algebras. We note that in those publications Steinberg algebras are defined over ample groupoids that are not necessarily Hausdorff, which requires a slightly more complicated approach than the one presented here.

\section{Main results}

Throughout this section we focus on prime Steinberg algebras. These are described, in various situations, in~\cite{steinberg1}. In particular, according to~\cite[Proposition 4.3]{steinberg1}, if $A_R(\G)$ is prime, then the (commutative unital) ring $R$ must actually be an integral domain. For this reason we consider only base rings of this sort in most of our results.

The next definition gives the main ingredients of our construction of a maximal commutative subalgebra of $A_R(\G)$.

\begin{definition} \label{partitiondef}
Let $\G$ be a Hausdorff ample groupoid. Given any disjoint open $U_1, U_2 \subseteq \G^{(0)}$ such that $\G^{(0)} = U_1 \cup U_2$, we define
\begin{align*}
U_{12} & := U_1 \cap \ran(\src^{-1}(U_2)), & U_{11} & := U_1\setminus U_{12}, \\ 
U_{21} & := U_2 \cap \ran(\src^{-1}(U_1)), & U_{22} & := U_2 \setminus U_{21}.
\end{align*}
Additionally, given any commutative unital ring $R$ define
\[
A_{ij}  := \{f \in A_R(\G) \mid \supp(f) \subseteq U_i\G U_j \},
\]
for all $i,j \in \{1,2\}$. 
\end{definition}

We shall show that if $A_R(\G)$ is a prime Steinberg algebra, then $\mathcal{Z}(A_R(\G)) + A_{21}$ is a maximal commutative subalgebra, where $\mathcal{Z}(A_R(\G))$ denotes the center of $A_R(\G)$. The main challenge will be to demonstrate that certain elements of the centralizer of $A_{21}$ belong to $\mathcal{Z}(A_R(\G))$. Toward that end, our first goal is to show that an element of a prime $A_R(\G)$ belongs to $\mathcal{Z}(A_R(\G))$ if and only if it behaves ``well" on $\src^{-1}(U_{12} \cup U_{21})$, as defined above. We begin with a lemma.

\begin{lemma} \label{INT_U}
Let $\G$ be a Hausdorff ample groupoid, and $R$ an integral domain, such that $A_R(\G)$ is prime. Suppose that $U_{i}$ and $U_{ij}$ $(i,j \in \{1,2\})$ are as in Definition~\ref{partitiondef}, and let $V := U_{11} \cup U_{22}$ and $W := U_{12} \cup U_{21}$. Then the following hold.
\begin{enumerate}[\rm (1)]
\item $V^{\circ} = \emptyset$, where $V^{\circ}$ denotes the interior of $V$.

\item Let $x\in \src^{-1}(W)$ and $y\in \G$ be such that $\src(x) = \ran(y)$. Then $xy \in \src^{-1}(W)$. 
\end{enumerate}
\end{lemma}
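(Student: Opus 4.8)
The plan is to handle the two parts by quite different means: part~(2) is a combinatorial statement about the sets $U_{ij}$ that uses neither $R$ nor primeness, while part~(1) is exactly where primeness enters, via topological transitivity of $\G$. Before starting I would record some bookkeeping. Put $P_i := \ran(\src^{-1}(U_i))$ for $i \in \{1,2\}$; since $\src$ is continuous and $\ran$ is a local homeomorphism, hence an open map, each $P_i$ is open, and thus $U_{12} = U_1 \cap P_2$ and $U_{21} = U_2 \cap P_1$ are open while $U_{11} = U_1 \setminus P_2$ and $U_{22} = U_2 \setminus P_1$. Note too that $U_{11}, U_{12}, U_{21}, U_{22}$ are pairwise disjoint and cover $\G^{(0)}$, so $V$ and $W$ partition $\G^{(0)}$; this will be used freely in part~(2). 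Finally, I take $U_1$ and $U_2$ to be nonempty, as befits the parts of a partition.

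For part~(1) I would argue by contradiction, the key external input being that a prime $A_R(\G)$ is topologically transitive. (This is available from \cite{steinberg1}; alternatively, a failure of topological transitivity yields nonempty open $U', V' \subseteq \G^{(0)}$ with $\src^{-1}(U') \cap \ran^{-1}(V') = \emptyset$, and then $\one_D * A_R(\G) * \one_B = 0$ for any compact slices $B \subseteq U'$ and $D \subseteq V'$, contradicting primeness.) So suppose $V^{\circ} \neq \emptyset$ and choose a nonempty open $O \subseteq V$. Since $O = (O \cap U_1) \cup (O \cap U_2)$ with $O \cap U_1 \subseteq U_{11}$ and $O \cap U_2 \subseteq U_{22}$, at least one of these two open sets is nonempty; say $O \cap U_1 \neq \emptyset$. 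Then $O \cap U_1 \subseteq U_{11} = U_1 \setminus P_2$ is disjoint from $P_2 = \ran(\src^{-1}(U_2))$, i.e.\ $\src^{-1}(U_2) \cap \ran^{-1}(O \cap U_1) = \emptyset$; but topological transitivity applied to the nonempty open sets $U_2$ and $O \cap U_1$ makes this intersection nonempty, a contradiction. The alternative $O \cap U_2 \neq \emptyset$ is symmetric under interchanging the indices $1$ and $2$.

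For part~(2), note $\src(xy) = \src(y)$ and $\ran(y) = \src(x) \in W$, so the assertion $xy \in \src^{-1}(W)$ is exactly the implication $\ran(y) \in W \Rightarrow \src(y) \in W$; since $V$ and $W$ partition $\G^{(0)}$, I would instead prove the contrapositive, that $\src(y) \in V$ forces $\ran(y) \in V$. Suppose $\src(y) \in U_{11} = U_1 \setminus P_2$. First, $\ran(y) \in U_1$: were $\ran(y) \in U_2$, then $y^{-1} \in \src^{-1}(U_2)$ would give $\src(y) = \ran(y^{-1}) \in P_2$, contrary to $\src(y) \notin P_2$. Second, $\ran(y) \notin P_2$: otherwise some $z$ with $\src(z) \in U_2$ satisfies $\ran(z) = \ran(y)$, the product $y^{-1}z$ is composable, and $\src(y) = \ran(y^{-1}z) \in P_2$, again a contradiction. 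Hence $\ran(y) \in U_1 \setminus P_2 = U_{11} \subseteq V$, and the case $\src(y) \in U_{22}$ is identical after swapping $1$ and $2$.

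The case analysis in part~(2) is routine once one has the identities $\src(y) = \ran(y^{-1})$, $\src(y^{-1}) = \ran(y)$, and the auxiliary composite $y^{-1}z$; I expect no real difficulty there. The substance, and the main obstacle, lies in part~(1): one must convert primeness into the dynamical statement of topological transitivity and then see that every nonempty open subset of $V$ contains a nonempty open piece of one of the ``unreachable'' sets $U_{11}$ or $U_{22}$, which transitivity precisely rules out. The point most worth checking carefully is that $\ran$ is an open map, so that the $P_i$ are open and $U_{11} = U_1 \setminus P_2$ is genuinely the relative complement of an open set.
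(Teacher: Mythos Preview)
Your proof is correct and follows essentially the same route as the paper's. For part~(1) both arguments reduce to topological transitivity via \cite{steinberg1}; your version applies transitivity directly to $U_2$ and the open piece $O\cap U_1\subseteq U_{11}$, whereas the paper first establishes $U_{21}\neq\emptyset$ and passes through a compact slice in $U_{21}$, so your argument is slightly more economical but not materially different. For part~(2) the paper proves the direct implication by case-splitting on whether $\src(y)\in U_1$ or $\src(y)\in U_2$, using the same auxiliary composite $y^{-1}z$; your contrapositive formulation (``$\src(y)\in V\Rightarrow\ran(y)\in V$'') is a neat repackaging of the same computation.
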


\begin{proof}
(1) First, we note that since $\src$ is continuous and $\ran$ is open, $U_{12}$ and $U_{21}$ are open subsets of $\G^{(0)}$. Since $A_R(\G)$ is prime, $\G$ is topologically transitive, by~\cite[Proposition 4.3]{steinberg1}, and so $U_{12} \neq \emptyset$ and $U_{21} \neq \emptyset$.

Now, suppose that $V^{\circ} \neq \emptyset$. Since $U_1$ and $U_2$ are open, and $\G^{(0)} = U_1 \cup U_2$, it must be the case that either $(V \cap U_1)^{\circ} \neq \emptyset$ or $(V \cap U_2)^{\circ} \neq \emptyset$. Let us suppose that $(V \cap U_1)^{\circ} \neq \emptyset$, since the other case can be handled analogously. Then $(U_{11})^{\circ}\neq \emptyset$, and so there exists a nonempty $B_1 \in \G^{co}$ such that $B_1 \subseteq U_{11}$, since the elements of $\G^{co}$ form a base for the topology on $\G$. By the previous paragraph, we can find a nonempty $B_2 \in \G^{co}$ such that $B_2 \subseteq U_{21}$. Since $\G$ is topologically transitive, there must then exist $x \in \G$ such that $\ran(x)\in B_1 \subseteq U_{11}$ and $\src(x) \in B_2 \subseteq U_{2}$, contradicting the definition of $U_{11}$. Thus $V^{\circ} = \emptyset$.

\smallskip

(2) Let us assume that $\src(x)\in U_{12}$, since the case where $\src(x)\in U_{21}$ can be proved similarly.
    
If $\src(y)\in U_2$, then $\ran(y^{-1}) = \src(y)\in U_2$ and $\src(y^{-1}) = \ran(y) = \src(x)\in U_1$, which gives
\[
\src(xy) = \src(y) = \ran(y^{-1})\in U_{21} \subseteq W.
\] 
So we may assume that $\src(y)\in U_1$. Then, since $\src(x)\in U_{12}$, there exists $z\in \G$ such that $\src(z)\in U_2$ and $\ran(z) = \src(x)\in U_1$. We have $\ran(y^{-1}z) = \src(y)\in U_1$ and $\src(y^{-1}z)\in U_2$. This implies that $\src(y)\in U_{12} \subseteq W$, from which it follows that $xy\in \src^{-1}(W)$. 
\end{proof}

According to~\cite[Proposition 4.13]{steinberg0}, the center $\mathcal{Z}(A_R(\G))$ of $A_R(\G)$ is precisely the set of all ``class functions" on $\G$. Next, we define a more general version of that concept--relative each subset of $\G$, and give an alternative characterization of $\mathcal{Z}(A_R(\G))$ for prime $A_R(\G)$, as mentioned above.

\begin{definition}
Let $\G$ be a Hausdorff ample groupoid, let $R$ be a commutative unital ring, let $X\subseteq \G$, and let $f\in A_R(\G)$. We say that $f$ is a \textit{class function on $X$}, if the following hold for every $x\in X$:
\begin{enumerate}[\rm (1)]
\item $f(x)\neq 0$ implies that $\ran(x) = \src(x)$,
\item $f(zxz^{-1}) = f(x)$, for all $z\in \G$ such that $\src(x) = \ran(x) = \src(z)$ and $zxz^{-1}\in X$.
\end{enumerate}
\end{definition}

\begin{proposition}\label{centerclass}
Let $\G$ be a Hausdorff ample groupoid, and $R$ an integral domain, such that $A_R(\G)$ is prime. Suppose that $U_{ij}$ $(i,j \in \{1,2\})$ are as in Definition~\ref{partitiondef}, let $V := U_{11} \cup U_{22}$, let $W := U_{12} \cup U_{21}$, and let $f\in A_R(\G)$. Then $f \in \mathcal{Z}(A_R(\G))$ if and only if $f$ is a class function on $\src^{-1}(W)$.
\end{proposition}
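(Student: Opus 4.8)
The plan is to dispose of the forward implication immediately and to concentrate all the work on the converse. If $f \in \mathcal{Z}(A_R(\G))$, then by~\cite[Proposition 4.13]{steinberg0} $f$ is a class function on all of $\G$; since the defining conditions of a class function on a set $X$ are imposed only at points of $X$, restricting to $X = \src^{-1}(W) \subseteq \G$ shows at once that $f$ is a class function on $\src^{-1}(W)$. The substance of the proposition is therefore the reverse direction: assuming $f$ is a class function on $\src^{-1}(W)$, I would show $f$ is a class function on all of $\G$, and then invoke~\cite[Proposition 4.13]{steinberg0} again to conclude $f \in \mathcal{Z}(A_R(\G))$.

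The engine of the argument is a handful of topological facts. First, $W = U_{12} \cup U_{21}$ is open (as noted in the proof of Lemma~\ref{INT_U}) and, because $\G^{(0)}$ is the disjoint union of $V$ and $W$ with $V^{\circ} = \emptyset$ by Lemma~\ref{INT_U}(1), it is dense in $\G^{(0)}$. Since $\src$ is an open local homeomorphism, it follows that $\src^{-1}(W)$ is dense in $\G$: any nonempty open set contains a nonempty compact slice $B$, whose image $\src(B)$ is open and hence meets $W$. Second, because $\G^{(0)}$ is Hausdorff, the isotropy $\Iso(\G) := \{x \in \G \mid \src(x) = \ran(x)\}$ is closed, being the preimage of the diagonal under the continuous map $x \mapsto (\src(x), \ran(x))$. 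Third, $\supp(f)$ is open and, as $R$ carries the discrete topology, $f$ is locally constant; in particular any two continuous $R$-valued functions that agree on a dense subset of their (common) domain agree everywhere on it.

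For condition~(1) of the class-function definition I would argue by contradiction: if $f(x) \neq 0$ but $x \notin \Iso(\G)$, then $\supp(f) \cap (\G \setminus \Iso(\G))$ is a nonempty open set, so by density it contains some $x' \in \src^{-1}(W)$ with $f(x') \neq 0$; the class-function hypothesis on $\src^{-1}(W)$ then forces $x' \in \Iso(\G)$, a contradiction. Condition~(2) is the main obstacle, since it compares the values of $f$ at the two distinct points $x$ and $zxz^{-1}$ and must be promoted from $\src^{-1}(W)$ to all of $\G$. Here I would fix $x \in \Iso(\G)$ and $z$ with $\src(z) = \src(x) = \ran(x)$, choose a compact slice $C \ni z$, and write $c(x') := (\src|_C)^{-1}(\src(x'))$ for the unique element of $C$ with source $\src(x')$. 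On the set $D := \{x' \in \Iso(\G) \mid \src(x') \in \src(C)\}$ the conjugation $x' \mapsto c(x')\, x'\, c(x')^{-1}$ is well defined (each factor composes, and the product lands in $\Iso(\G)$) and continuous, so $g(x') := f(c(x')\, x'\, c(x')^{-1})$ is a continuous $R$-valued function on $D$. Writing $\psi := (\ran|_C) \circ (\src|_C)^{-1}$, the source of the conjugate equals $\psi(\src(x'))$, so the class-function hypothesis yields $g(x') = f(x')$ whenever $\src(x') \in W \cap \psi^{-1}(W)$.

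Since $\psi$ is a homeomorphism between open subsets of $\G^{(0)}$ and $W$ is open and dense, $W \cap \psi^{-1}(W)$ is open and dense in $\src(C)$, so $\{x' \in D \mid \src(x') \in W \cap \psi^{-1}(W)\}$ is dense in $D$. By the locally constant principle recorded above, $g = f$ on all of $D$; evaluating at $x$, where $c(x) = z$, gives $f(zxz^{-1}) = f(x)$, which is precisely condition~(2). With both conditions verified on all of $\G$, the converse---and hence the proposition---follows. I expect the setup of the continuous conjugation map on $D$ and the verification that $W \cap \psi^{-1}(W)$ remains dense to be the only genuinely delicate points; everything else reduces to the three topological observations above.
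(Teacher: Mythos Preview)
Your treatment of the forward implication and of condition~(1) is correct. The gap is in condition~(2): from the fact that $W \cap \psi^{-1}(W)$ is open and dense in $\src(C)$ you conclude that $\{x' \in D \mid \src(x') \in W \cap \psi^{-1}(W)\}$ is dense in $D = \Iso(\G) \cap \src^{-1}(\src(C))$. This does not follow, because $D$ need not be open in $\G$; intersecting a dense open subset of $\src^{-1}(\src(C))$ with the possibly thin set $D$ need not yield a dense subset of $D$. Concretely, if the isotropy is nontrivial at $\src(x)$ but trivial at all nearby units, then every point of $D$ near a nontrivial isotropy element $x \notin \G^{(0)}$ has source $\src(x)$, and if $\src(x) \notin W$ the intersection with $\src^{-1}(W \cap \psi^{-1}(W))$ is empty there. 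The fix is short: having already shown $\supp(f) \subseteq \Iso(\G)$, you may reduce (via the symmetry $x \leftrightarrow zxz^{-1}$, $z \leftrightarrow z^{-1}$) to the case $f(x) \neq 0$, and then replace $D$ by an open neighborhood of $x$ contained in $\supp(f) \cap \src^{-1}(\src(C))$; on that \emph{open} set your density argument is valid and the rest goes through.

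The paper's proof of the converse takes a different and more direct route. Rather than extending the class-function property from $\src^{-1}(W)$ to all of $\G$, it fixes an arbitrary $g \in A_R(\G)$ and shows directly that $(fg - gf)(x) = 0$ for every $x \in \src^{-1}(W)$, by expanding the convolution and using Lemma~\ref{INT_U}(2) to keep every term $xy^{-1}$ appearing in the sum inside $\src^{-1}(W)$, where the class-function hypotheses on $f$ apply. Thus $\supp(fg - gf) \subseteq \src^{-1}(V)$; since this support is open and $V^{\circ} = \emptyset$, it follows that $fg = gf$. This needs only the density of $\src^{-1}(W)$ in $\G$, not any density statement inside $\Iso(\G)$, and so sidesteps the subtlety above entirely. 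Your (patched) approach has the conceptual bonus of showing that $f$ is in fact a class function on all of $\G$, whereas the paper's computation is shorter and avoids the continuity-of-conjugation bookkeeping.
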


\begin{proof}
Suppose that $f \in \mathcal{Z}(A_R(\G))$. Then $f$ is a class function (on $\G$), by~\cite[Proposition 4.13]{steinberg0}, and so, in particular, it is a class function on $\src^{-1}(W)$.

Conversely, suppose that $f$ is a class function on $\src^{-1}(W)$. We begin by taking arbitrary $g\in A_R(\G)$ and $x\in \src^{-1}(W)$, and showing that $(fg - gf)(x) = 0$. Given any $y \in \G$ such that $\src(x) = \src(y)$, we have $xy^{-1}\in \src^{-1}(W)$, by Lemma~\ref{INT_U}(2). Thus, by our hypothesis on $f$, we have $f(xy^{-1}) = 0$ whenever $\ran(xy^{-1})\neq \src(xy^{-1})$, for any such $y$. Then, using convolution gives 
\begin{equation}\label{classf1}
(fg)(x) = \sum_{y\in \src^{-1}\src(x)}f(xy^{-1})g(y) = \sum_{y\in (\src^{-1}\src(x))\cap (\ran^{-1}\ran(x))}f(xy^{-1})g(y),
\end{equation}
since $\ran(x) = \ran(xy^{-1})$ and $\src(xy^{-1}) = \ran(y)$. Now, $y^{-1}(xy^{-1})y\in \src^{-1}(W)$, for any $y \in \G$ such that $\src(x) = \src(y)$, and so
\[
f(xy^{-1}) = f(y^{-1}(xy^{-1})y) = f(y^{-1}x),
\]
by hypothesis on $f$. Letting $z = y^{-1}x$ (so that $\src(z) = \src(x)$ and $\ran(z) = \src(y)$), and using the fact that the ring $R$ is commutative, along with the hypothesis on $f$, we see that the last expression in $(\ref{classf1})$ equals
\[
\sum_{z\in (\src^{-1}\src(x))\cap (\ran^{-1}\ran(x))}g(xz^{-1})f(z) = \sum_{z\in \src^{-1}\src(x)}g(xz^{-1})f(z) = (gf)(x).\] 
Hence $(fg - gf)(x) = 0$, as claimed. Since $\G = \src^{-1}(W)\cup \src^{-1}(V)$, this means that $\supp(fg-gf)\subseteq \src^{-1}(V)$, for all $g\in A_R(\G)$.

Now, suppose that $fg-gf\neq 0$, for some $g\in A_R(\G)$. Then we can find a nonempty $B \in \G^{co}$ such that $B\subseteq \supp(fg-gf)$, since $\supp(fg-gf)$ is open. By the previous paragraph and the fact that $\src$ is open, this implies that $\src(B)$ is a nonempty open subset of $V$, contradicting Lemma~\ref{INT_U}(1). Thus, if $f$ is a class function on $\src^{-1}(W)$, then $fg-gf = 0$, for all $g\in A_R(\G)$, and so $f \in \mathcal{Z}(A_R(\G))$.
\end{proof}

\begin{remark} \label{decompremark}
Let $f$ be an arbitrary element of a Steinberg algebra $A_R(\G)$. Using the fact that $f$ is an $R$-linear combination of characteristic functions $\one_B$ ($B \in \G^{co}$), along with equations \eqref{setofeq1} and \eqref{setofeq2}, it is easy to see that $f = \one_{U}f\one_U$ for some compact slice $U\subseteq \G^{(0)}$. Moreover, in the notation of Definition~\ref{partitiondef}, 
\[
\one_U = \one_{U\cap U_1} + \one_{U \cap U_2} = \one_{UU_1} + \one_{UU_2},
\] 
and so
\[
f = \one_{U  U_1}f \one_{U  U_1} + \one_{U  U_2} f \one_{U  U_1} + \one_{U  U_1} f\one_{U  U_2} +\one_{U  U_2} f \one_{U  U_2} \in A_{11} + A_{21} + A_{12} + A_{22}.
\]
Clearly, each $A_{ij}$ is an $R$-submodule of $A_R(\G)$, and the above sum is direct. Hence 
\[
A_R(\G) = A_{11}  \oplus A_{21} \oplus A_{12} \oplus A_{22}
\] 
as $R$-modules. Moreover, for all $i,j,k,l \in \{1,2\}$, we have $A_{ij}A_{jk} \subseteq A_{ik}$, and $A_{ij}A_{kl} = \{0\}$ whenever $j \neq k$. So the above decomposition of $A$ can be viewed in matrix form:
\[
A_R(\G) = \begin{pmatrix} 
A_{11} & A_{12} \\ 
A_{21} & A_{22} 
\end{pmatrix}.
\]
Additionally, $A_{21}$ is a commutative $R$-subalgebra of $A_R(\G)$, since $A_{21}A_{21} = \{0\}$, and hence so is $\mathcal{Z}(A_R(\G)) + A_{21}$ (as well as $\mathcal{Z}(A_R(\G)) + A_{12}$). 
\end{remark}

Now we turn to our main goal--showing that the subalgebra $\mathcal{Z}(A_R(\G)) + A_{21}$ above is maximal for the property of being commutative, provided that $A_R(\G)$ is prime. We require a technical lemma.

\begin{lemma}\label{longlemma} 
Let $\G$ be a Hausdorff ample groupoid, and $R$ an integral domain, such that $A_R(\G)$ is prime. Suppose that $U_{i}$, $U_{ij}$, and $A_{ij}$ $(i,j \in \{1,2\})$ are as in Definition~\ref{partitiondef}. Let $f \in C(A_{21})$, where $C(A_{21})$ is the centralizer of $A_{21}$ in $A_R(\G)$, and write
\[
f = \begin{pmatrix} 
f_{11} & f_{12} \\ 
f_{21} & f_{22} \end{pmatrix} \in
\begin{pmatrix} 
A_{11} & A_{12} \\ 
A_{21} & A_{22} 
\end{pmatrix},
\]
using the decomposition of $A_R(\G)$ from Remark~\ref{decompremark}. Then the following hold.
\begin{enumerate}[\rm (1)]
\item $f_{12} = 0$.

\item If $x\in \G$ is such that $f_{11}(x)\neq 0$ or $f_{22}(x)\neq 0$, then $\src(x) = \ran(x)$.
	
\item Let $i,j \in \{1,2\}$ be distinct, and let $x \in U_i\G U_i$ be such that $\src(x) = \ran(x)$. Then $f_{ii}(x) = f_{jj}(zxz^{-1})$, for all $z \in U_j\G U_i$ satisfying $\src(z) = \ran(x)$.	

\item Let $i,j \in \{1,2\}$ be distinct, and let $x\in U_i\G U_i$ be such that $\src(x) = \ran(x)$ and $\src(y) = \ran(x)$ for some $y\in U_j\G U_i$. Then 
\[
(f_{11}+f_{22})(x) = (f_{11}+f_{22})(zxz^{-1}),
\]
for all $z\in \G$ satisfying $\src(z) = \ran(x)$.

\item If $f_{11} + f_{22} \neq 0$, then $W \subseteq \src(\supp(f_{11} + f_{22}))$, where $W := U_{12} \cup U_{21}$.

\item If $\G^{(0)}$ is not compact, then $f_{11} + f_{22} = 0$.
\end{enumerate}
\end{lemma}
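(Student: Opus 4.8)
The plan is to argue by contradiction, leaning entirely on the preparatory parts (1)--(5) of this lemma together with the density of $W$ that is forced by Lemma~\ref{INT_U}(1). Once those are in hand, part (6) reduces to a short point-set topology argument, so I would not expect to reprove anything about the centralizer structure.

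First I would suppose, for contradiction, that $f_{11} + f_{22} \neq 0$. Then part (5) applies verbatim and gives
\[
W \subseteq \src(\supp(f_{11} + f_{22})).
\]
Next I would record the compactness input: since $f_{11} + f_{22} \in A_R(\G)$, its support $\supp(f_{11} + f_{22})$ is compact by the very definition of the Steinberg algebra. Because $\src$ is continuous and $\G^{(0)}$ is Hausdorff (it is the unit space of a Hausdorff ample groupoid), the image $\src(\supp(f_{11} + f_{22}))$ is a compact, and hence closed, subset of $\G^{(0)}$.

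The remaining ingredient is that $W$ is dense in $\G^{(0)}$, which I would extract from Lemma~\ref{INT_U}(1). Indeed, $\G^{(0)} = V \cup W$ with $V$ and $W$ disjoint, and $V^{\circ} = \emptyset$; so any nonempty open subset of $\G^{(0)}$ must meet $W$, for otherwise it would be contained in $V$ and witness $V^{\circ} \neq \emptyset$. Thus $\overline{W} = \G^{(0)}$. Combining this with the closedness of $\src(\supp(f_{11} + f_{22}))$ yields
\[
\G^{(0)} = \overline{W} \subseteq \src(\supp(f_{11} + f_{22})),
\]
so $\G^{(0)}$ is contained in a compact set and is therefore itself compact. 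This contradicts the hypothesis that $\G^{(0)}$ is not compact, and hence $f_{11} + f_{22} = 0$.

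The genuine difficulty of the statement is front-loaded into parts (1)--(5); granting those, the main obstacle here is merely making sure the two topological facts are applied correctly. Specifically, I would be careful that $\src(\supp(f_{11}+f_{22}))$ is really closed (this uses Hausdorffness of $\G^{(0)}$, which is available) and that density of $W$ is deduced from the empty-interior statement $V^{\circ} = \emptyset$ rather than assumed outright. Everything else is routine.
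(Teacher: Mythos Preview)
Your argument for part~(6) is correct and essentially the same as the paper's: both deduce that $\src(\supp(f_{11}+f_{22}))$ is compact and closed in $\G^{(0)}$, then combine (5) with Lemma~\ref{INT_U}(1) to reach a contradiction, the only difference being that the paper phrases the last step via the nonempty open complement $X \subseteq V$ while you phrase it via density of $W$. One cosmetic point: the sentence ``$\G^{(0)}$ is contained in a compact set and is therefore itself compact'' is not valid reasoning in general; what actually happens is that $\src(\supp(f_{11}+f_{22})) \subseteq \G^{(0)}$ forces equality, so $\G^{(0)}$ equals the compact set.
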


\begin{proof}
First we note that 
\begin{equation} \label{matrix equation}
\begin{pmatrix} 
0 & 0 \\ 
g_{21}f_{11} & g_{21} f_{12} 
\end{pmatrix} 
= gf = fg = 
\begin{pmatrix} 
f_{12}g_{21} & 0 \\ 
f_{22}g_{21} & 0 
\end{pmatrix}
\end{equation}
for any element
\[
g = \begin{pmatrix} 
0 & 0 \\ 
g_{21} & 0 \end{pmatrix}
\]
of $A_{21}$.

\smallskip

(1) Since $\supp(f_{12}) \subseteq U_1\G U_2$, it suffices to take an arbitrary $x \in U_1\G U_2$, and show that $f_{12} (x) = 0$. Since $U_1\G U_2 = \ran^{-1}(U_1) \cap \src^{-1}(U_2)$ is open, we can find a $B \in \G^{co}$ such that $x \in B \subseteq U_1\G U_2$. Then $B^{-1} \subseteq U_2\G U_1$, and so $\one_{B^{-1}} \in A_{21}$, which implies that $f_{12}\one_{B^{-1}} = 0$, by equation \eqref{matrix equation}. Finally, using equations \eqref{setofeq1} and \eqref{setofeq2}, we have 
\[
f_{12} (x) = f_{12}(xx^{-1}x) = \one_{B^{-1}}f_{12}\one_{B^{-1}} (x^{-1}) = 0,
\]
as desired.

\smallskip

(2) Suppose that $x\in \G$ is such that $f_{11}(x)\neq 0$, but $\src(x) \neq \ran(x)$. Since $\supp(f_{11})\subseteq U_1\G U_1$, we have $\src(x), \ran(x) \in U_1$. Since $\G$ is Hausdorff and $U_1$ is open, we can find disjoint $B, B' \in \G^{co}$ such that $\src(x)\in B \subseteq U_1$ and $\ran(x)\in B' \subseteq U_1$. Then $\one_{B'}\one_{B} = 0$, while using equations \eqref{setofeq1} and \eqref{setofeq2}, together with the fact that $B, B' \subseteq \G^{(0)}$, gives
\[
\one_{B'}f_{11}\one_{B}(x) = f_{11}(\ran(x)^{-1}x\src(x)^{-1}) = f_{11}(\ran(x)x\src(x)) = f_{11}(x) \neq 0.
\]

Now let $D \in \G^{co} \setminus \{\emptyset\}$ be such that $D\subseteq U_2$. Since $A_R(\G)$ is assumed to be prime, and since $\one_{D} \neq 0 \neq \one_{B'}f_{11}\one_{B}$, we can find $h\in A_R(\G)$ such that  $\one_{D}h(\one_{B'}f_{11}\one_{B})\neq 0$. Since $\one_{D}h\one_{B'}\in A_{21}$, equation \eqref{matrix equation} implies that
\[
\one_{D}h(\one_{B'}f_{11}\one_{B}) = (\one_{D}h\one_{B'})f_{11}\one_{B} = f_{22}(\one_{D}h\one_{B'})\one_{B} = (f_{22}\one_{D}h)(\one_{B'}\one_{B}) = 0,
\]
producing a contradiction. Thus $\src(x) = \ran(x)$.

The case where $f_{22}(x)\neq 0$ can be handled analogously.

\smallskip

(3) We may assume that $i=1$ and $j=2$, since the opposite case can be proved analogously. Let $z \in U_2\G U_1$ be such that $\src(z) = \ran(x) \ (=\src(x))$. Then we can find $B \in \G^{co}$ such that $z\in B \subseteq U_2 \G U_1$. Since $\one_{B}\in A_{21}$, we have $\one_{B}f_{11} = f_{22}\one_{B}$, by equation \eqref{matrix equation}. Thus, using equations \eqref{setofeq1} and \eqref{setofeq2},
\[
f_{22}(zxz^{-1}) = \one_{B^{-1}} f_{22} \one_B(x) = \one_{B^{-1}} \one_B f_{11}(x) = \one_{\src(B)} f_{11}(x) = f_{11}(x),
\]
since $\ran(x) = \src(z) \in \src(B)$.

\smallskip

(4) Let $z\in \G$ be such that $\src(z) = \ran(x)$. If $\ran(z)\in U_j$, then, by $(3)$,
\[
(f_{11}+f_{22})(x) = f_{ii}(x) = f_{jj}(zxz^{-1}) = (f_{11}+f_{22})(zxz^{-1}).
\]
So let us suppose that $\ran(z)\in U_i$. Then $zxz^{-1} \in U_i\G U_i$, and $yz^{-1} \in U_j\G U_i$. Hence, using $(3)$ again, we have
\[
f_{ii}(zxz^{-1}) = f_{jj}(yz^{-1}zxz^{-1}zy^{-1}) = f_{jj}(yxy^{-1}) = f_{ii}(x),
\]
and so
$(f_{11}+f_{22})(x) = (f_{11}+f_{22})(zxz^{-1})$.

\smallskip

(5) Since $\supp(f_{11}+f_{22})$ is an open set, 
\[
\supp(f_{11}+f_{22}) = B_1\cup \dots \cup B_k,
\] 
for some $B_1, \dots, B_k \in \G^{co}$. Then 
\[
\src(\supp(f_{11}+f_{22})) = \src(B_1)\cup\ldots\cup \src(B_k) \subseteq \G^{(0)}.
\]
Since each $B_i$ is compact, and since $\src$ is continuous, each $\src(B_i)$ is compact as well. Given that $\G^{(0)}$ is Hausdorff, each $\src(B_i)$ must then be closed. Since $W$ is open (as a result of $\src$ being continuous and $\ran$ being open), 
\[
X := W \setminus \src(\supp(f_{11} + f_{21}))
\] 
is open as well. Let us assume that $f_{11} + f_{22} \neq 0$, and show that $X = \emptyset$.

Suppose that $X \neq \emptyset$. Then there exists $B \in \G^{co} \setminus \{\emptyset\}$ such that $B \subseteq X$. Since $A_R(\G)$ is prime, $\G$ is topologically transitive, by~\cite[Proposition 4.3]{steinberg1}. Given that $\supp(f_{11} + f_{21})$ is a nonempty open subset of $\G^{(0)}$, we can then find $y \in \G$ such that $\src(y) \in B$ and $\ran(y) \in \src(\supp(f_{11} + f_{22}))$. Thus $\ran(y)=\src(x)$, for some $x \in \supp(f_{11} + f_{22})$, where $\src(x) = \ran(x)$, by (2). We may assume that $x\in U_1\G U_1$, since the case where $x\in U_2\G U_2$ can be dispatched analogously.

Now, suppose that $\src(y)\in B\cap U_2$. Then $y^{-1}\in U_2\G U_1$ and $\src(y^{-1}) = \ran(x)$, and so $y^{-1}xy\in \supp(f_{11}+f_{22})$, by (4). But then $\src(y)\in \src(\supp(f_{11}+f_{22}))$, which contradicts $\src(y)\in X$. So we may assume that $\src(y)\in B\cap U_1$. Since $\src(y)\in W$, we must have $\src(y)\in U_{12}$. Hence there exists $z\in \G$ such that $\src(z)\in U_2$ and $\ran(z) = \src(y)$. Then $z^{-1}y^{-1}\in U_2\G U_1$, and so, by (4), we have 
\[
(f_{11}+f_{22})(x) = (f_{11}+f_{22})(y^{-1}xy).
\] 
But then, necessarily, $\src(y)\in \src(\supp(f_{11}+f_{22}))$, which again contradicts $\src(y)\in X$. Therefore $X = \emptyset$.

\smallskip

(6) Suppose that $\G^{(0)}$ is not compact, but $f_{11} + f_{22} \neq 0$. As shown in the proof of (5), $\src(\supp(f_{11} + f_{21} ))$ is a finite union of compact closed sets, and is therefore itself compact and closed. Since $\G^{(0)}$ is open but not compact, the set 
\[
X:= \G^{(0)} \setminus \src(\supp(f_{11} + f_{22}))
\] 
is open but necessarily nonempty. Then, by (5), 
\[
X \subseteq \G^{(0)} \setminus W = U_{11} \cup U_{22},
\]
which contradicts Lemma~\ref{INT_U}(1). Thus $f_{11} + f_{22} = 0$.
\end{proof}

It is shown in~\cite[Proposition 4.11]{steinberg0} (alternatively, see~\cite[Proposition 2.6]{rigby}) that $A_R(\G)$ is unital if and only if $\G^{(0)}$ is compact. A consequence of the previous lemma is that a prime non-unital Steinberg algebra $A_R(\G)$, with $\G^{(0)}$ having cardinality at least $2$ (i.e., with $\G$ not being a group), must have a trivial center.

\begin{proposition} \label{steinberg-center}
Let $\G$ be a Hausdorff ample groupoid with $|\G^{(0)}|\geq 2$, and let be $R$ an integral domain, such that $A_R(\G)$ is prime. If $\G^{(0)}$ is not compact, then $\mathcal{Z}(A_R(\G)) = 0$.
\end{proposition}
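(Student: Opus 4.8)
The plan is to show that every $f \in \mathcal{Z}(A_R(\G))$ vanishes, by choosing a suitable partition of $\G^{(0)}$ and feeding it into Lemma~\ref{longlemma}. The point is that a central element lies in the centralizer $C(A_{21})$ for \emph{any} partition as in Definition~\ref{partitiondef}, so I may decompose $f = f_{11} + f_{12} + f_{21} + f_{22}$ according to Remark~\ref{decompremark} and try to force each of the four blocks to be $0$.

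First I would manufacture an appropriate partition. Since $\G$ is ample and $\G^{(0)}$ is a (clopen) slice, the compact slices contained in $\G^{(0)}$---equivalently, the compact open subsets of $\G^{(0)}$---form a base for its topology. Hence, fixing any point $u \in \G^{(0)}$ (which exists since $|\G^{(0)}| \geq 2$), there is a compact open set $U_1$ with $u \in U_1 \subseteq \G^{(0)}$. Because $U_1$ is compact while $\G^{(0)}$ is not, we have $U_1 \neq \G^{(0)}$, and since $U_1$ is closed (compact in a Hausdorff space), $U_2 := \G^{(0)} \setminus U_1$ is a nonempty open set. Thus $U_1, U_2$ are disjoint nonempty open sets with $\G^{(0)} = U_1 \cup U_2$, so Definition~\ref{partitiondef} applies; moreover, both parts being nonempty is exactly what lets the proofs of Lemmas~\ref{INT_U} and~\ref{longlemma} conclude, via topological transitivity, that $U_{12}$ and $U_{21}$ are nonempty.

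With this partition fixed, centrality of $f$ gives $f \in C(A_{21})$, so Lemma~\ref{longlemma}(1) yields $f_{12} = 0$, and Lemma~\ref{longlemma}(6)---which applies precisely because $\G^{(0)}$ is not compact---yields $f_{11} + f_{22} = 0$; since the sum $A_{11} \oplus A_{22}$ is direct (Remark~\ref{decompremark}), this forces $f_{11} = f_{22} = 0$. To eliminate the remaining block $f_{21}$, I would rerun the argument with the roles of $U_1$ and $U_2$ interchanged: under the partition $(U_2, U_1)$ the block in the lower-left corner becomes the original $A_{12}$, and $f$ still lies in its centralizer by centrality, so the analogue of Lemma~\ref{longlemma}(1) gives $f_{21} = 0$. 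Therefore all four components vanish, $f = 0$, and hence $\mathcal{Z}(A_R(\G)) = 0$.

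The genuine content, rather than the final assembly, lies in two places. The first is constructing the partition so that \emph{both} $U_1$ and $U_2$ are nonempty: this is where non-compactness of $\G^{(0)}$ together with ampleness is used, and it is the same hypothesis that powers Lemma~\ref{longlemma}(6). The second is recognizing that a single orientation of the partition controls only three of the four blocks---the off-diagonal term $f_{21}$ is invisible to the condition $f \in C(A_{21})$ because $A_{21}^2 = \{0\}$---so that $f_{21}$ can only be removed by invoking the symmetric version of the lemma on the swapped partition.
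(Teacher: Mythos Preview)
Your proposal is correct and follows essentially the same route as the paper's proof: pick a clopen partition, apply Lemma~\ref{longlemma}(1) and (6) to a central element viewed as an element of $C(A_{21})$ to kill the $f_{12}$, $f_{11}$, and $f_{22}$ blocks, and then invoke the $1\leftrightarrow 2$ symmetry (equivalently, the analogue of Lemma~\ref{longlemma}(1) for $C(A_{12})$) to kill $f_{21}$. The only cosmetic difference is that the paper packages the conclusion as $\mathcal{Z}(A_R(\G)) \subseteq A_{21}$ and $\mathcal{Z}(A_R(\G)) \subseteq A_{11}\oplus A_{22}$ and intersects, whereas you zero out the blocks one at a time; your explicit construction of the partition via a compact open neighbourhood is also a bit more detailed than the paper's one-line appeal to $|\G^{(0)}|\geq 2$.
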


\begin{proof}
Suppose that $\G^{(0)}$ is not compact. Since $|\G^{(0)}|\geq 2$, we can decompose $\G^{(0)}$ into a union of disjoint sets, as in Definition~\ref{partitiondef}. Then $C(A_{21}) \subseteq A_{21}$, by Lemma~\ref{longlemma}(1,6), and so $\mathcal{Z}(A_R(G)) \subseteq A_{21}$. But, by Lemma~\ref{longlemma}(1), together with the analogue of that statement for $A_{12}$, we have $\mathcal{Z}(A_R(G)) \subseteq A_{11} \oplus A_{22}$. It follows that $\mathcal{Z}(A_R(G)) = 0$.
\end{proof}

We are now ready for our main result.

\begin{theorem} \label{steinbergtheorem}
Let $\G$ be a Hausdorff ample groupoid, and $R$ an integral domain, such that $A_R(\G)$ is prime. Suppose that $\G^{(0)} = U_1 \cup U_2$, for some disjoint open $U_1, U_2 \subseteq \G^{(0)}$, and let 
\[
A_{ij}  := \{f \in A_R(\G) \mid \supp(f) \subseteq U_i\G U_j\},
\]
for all $i,j \in \{1,2\}$. Then $\mathcal{Z}(A_R(G)) + A_{21}$ is a maximal commutative subalgebra of $A_R(\G)$. 
\end{theorem}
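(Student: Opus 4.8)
The plan is to show that $S := \mathcal{Z}(A_R(\G)) + A_{21}$ coincides with the centralizer $C(A_{21})$ of $A_{21}$ in $A_R(\G)$, and to deduce maximality from this. Indeed, $S$ is already known to be a commutative subalgebra by Remark~\ref{decompremark}. If $T$ is any commutative subalgebra with $S \subseteq T$, then since $A_{21} \subseteq S \subseteq T$ and $T$ is commutative, every element of $T$ commutes with every element of $A_{21}$; that is, $T \subseteq C(A_{21})$. Hence, once we establish $C(A_{21}) = S$, it will follow that $T \subseteq S$, forcing $T = S$, which is exactly maximality.

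One inclusion is immediate: the center commutes with all of $A_R(\G)$, while $A_{21}A_{21} = \{0\}$ (Remark~\ref{decompremark}) shows that $A_{21}$ commutes with itself; hence $S = \mathcal{Z}(A_R(\G)) + A_{21} \subseteq C(A_{21})$. For the reverse inclusion $C(A_{21}) \subseteq S$, I would take $f \in C(A_{21})$ and use the decomposition $f = f_{11} + f_{12} + f_{21} + f_{22}$ from Remark~\ref{decompremark}. Lemma~\ref{longlemma}(1) gives $f_{12} = 0$, and $f_{21} \in A_{21}$ automatically, so it remains only to show $f_{11} + f_{22} \in \mathcal{Z}(A_R(\G))$; then $f = (f_{11} + f_{22}) + f_{21} \in S$. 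By Proposition~\ref{centerclass}, it suffices to verify that $f_{11} + f_{22}$ is a class function on $\src^{-1}(W)$, where $W := U_{12} \cup U_{21}$. Condition~(1) of that definition is handled by Lemma~\ref{longlemma}(2): since $f_{11}$ and $f_{22}$ have disjoint supports (contained in $U_1\G U_1$ and $U_2\G U_2$ respectively), $(f_{11}+f_{22})(x) \neq 0$ forces $f_{11}(x) \neq 0$ or $f_{22}(x) \neq 0$, whence $\src(x) = \ran(x)$.

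The main work, and the step I expect to be the principal obstacle, is verifying condition~(2), i.e.\ the conjugation-invariance of $f_{11} + f_{22}$ on $\src^{-1}(W)$. I would first reduce to an $x$ with $\src(x) = \ran(x) \in W$ (otherwise condition~(2) is vacuous, as there is no $z$ with $\src(z) = \src(x) = \ran(x)$), and then split according to whether $\src(x) \in U_{12}$ or $\src(x) \in U_{21}$. In the first case $x \in U_1\G U_1$, and the definition $U_{12} = U_1 \cap \ran(\src^{-1}(U_2))$ provides an element of $\src^{-1}(U_2)$ with range $\src(x)$; inverting it yields a witness $y \in U_2\G U_1$ with $\src(y) = \ran(x)$. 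This is precisely the hypothesis needed to apply Lemma~\ref{longlemma}(4) with $i = 1$, $j = 2$, giving $(f_{11}+f_{22})(zxz^{-1}) = (f_{11}+f_{22})(x)$ for every $z$ with $\src(z) = \ran(x)$, which is more than enough for condition~(2). The case $\src(x) \in U_{21}$ is symmetric, using $i = 2$, $j = 1$.

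The delicate point throughout is that the ``cross'' sets $U_{12}, U_{21}$ are engineered exactly so that the witness $y$ exists, and this is what lets Lemma~\ref{longlemma}(4) convert the centralizer condition into the class-function condition; without landing in $W$ there would be no such $y$, which explains why $\src^{-1}(W)$, rather than all of $\G$, is the correct domain on which to test the class-function property. With both conditions verified, Proposition~\ref{centerclass} gives $f_{11}+f_{22} \in \mathcal{Z}(A_R(\G))$, completing the inclusion $C(A_{21}) \subseteq S$; together with the easy inclusion and the reduction in the first paragraph, this proves that $S$ is a maximal commutative subalgebra.
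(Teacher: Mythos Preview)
Your proposal is correct and follows essentially the same route as the paper: decompose an element of the centralizer via Remark~\ref{decompremark}, kill $f_{12}$ with Lemma~\ref{longlemma}(1), and then use Lemma~\ref{longlemma}(2,4) together with the existence of a witness $y$ coming from the definition of $U_{12}$ (or $U_{21}$) to verify via Proposition~\ref{centerclass} that $f_{11}+f_{22}$ is central. The only cosmetic difference is that you prove the slightly sharper statement $C(A_{21}) = S$, whereas the paper phrases it as $C(S) = S$; but since the paper's argument for $C(S)\subseteq S$ only uses that $f$ commutes with $A_{21}$, the two are in effect the same proof.
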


\begin{proof}
By Remark~\ref{decompremark}, $T:=\mathcal{Z}(A_R(G)) + A_{21}$ is a commutative subalgebra of $A_R(\G)$. Thus it suffices to show that $T$ is its own centralizer $C(T)$ in $A_R(\G)$.

Suppose that $f \in C(T)$. Then, in particular, $f \in C(A_{21})$. Using the notation of Lemma~\ref{longlemma}, we have
\[
f = \begin{pmatrix} 
f_{11} & 0 \\ 
f_{21} & f_{22} \end{pmatrix} \in
\begin{pmatrix} 
A_{11} & A_{12} \\ 
A_{21} & A_{22} 
\end{pmatrix},
\]
by (1) in that lemma. If $f_{11}+f_{22} = 0$, then $f \in A_{21} \subseteq T$. So let us assume that $f_{11}+f_{22} \neq 0$.

Suppose that $x \in \src^{-1}(W) \cap \supp(f_{11} + f_{22})$, where $W := U_{12} \cup U_{21}$, using the notation of Definition~\ref{partitiondef}. Then $\src(x) = \ran(x)$, by Lemma~\ref{longlemma}(2). Moreover, $\src(x) \in U_{ij}$, for some distinct $i,j \in \{1,2\}$, which means that $x \in U_i\G U_i$, and there exists $z \in \G$ such that $\src(z) \in U_j$ and $\ran(z) = \ran(x)$. Then $z^{-1} \in U_j\G U_i$ and $\src(z^{-1}) = \ran(x)$, and so $f_{11}+f_{22}$ is a class function on $\src^{-1}(W)$, by Lemma~\ref{longlemma}(4). Hence $f_{11}+f_{22} \in \mathcal{Z}(A_R(G))$, by Proposition~\ref{centerclass}, and therefore $f \in T$. Thus $C(T) = T$, as claimed.
\end{proof}

\section{Leavitt path algebras}

In this section we use Theorem~\ref{steinbergtheorem} to construct maximal commutative subalgebras of an arbitrary prime Leavitt path algebra--a special type of Steinberg algebra, but described in the graph-theoretic language associated with Leavitt path algebras. We begin by recalling relevant definitions and results. We refer the reader to~\cite{LPAbook} for general information about Leavitt path algebras, and to~\cite{rigby} for details on realizing Leavitt path algebras as Steinberg algebras. 

\subsection{Graphs}

A \textit{directed graph} $E=(E^{0},E^{1},\ran,\so)$ consists of two disjoint sets $E^{0}$ and $E^{1}$, where $E^0 \neq \emptyset$, together with functions $\ran,\so:E^{1}\rightarrow E^{0}$, called \emph{range} and \emph{source}, respectively. The elements of $E^{0}$ and $E^{1}$ are called \textit{vertices} and \textit{edges}, respectively. We shall refer to directed graphs as simply ``graphs" from now on.

Let $E=(E^{0},E^{1},\ran,\so)$ be a graph. A vertex $v \in E^0$ is called a \emph{sink} if $\so^{-1}(v) = \emptyset$, an \emph{infinite emitter} if $\so^{-1}(v)$ is infinite, \emph{regular} if it is neither a sink nor an infinite emitter, and \emph{singular} if it is not regular. A sequence $\alpha = e_1e_2\cdots$ of edges $e_1,e_2, \dots \in E^1$, such that $\ran(e_i)=\so(e_{i+1})$ for each $i$, is called a \emph{(finite) path}, in case it is finite, and an \emph{infinite path}, in case it is infinite. For any path $\alpha = e_1\cdots e_n$ we define $\so(\alpha):=\so(e_1)$ to be the \emph{source} of $\alpha$, $\ran(\alpha):=\ran(e_n)$ to be the \emph{range} of $\alpha$, and $|\alpha|:=n$ to be the \emph{length} of $\alpha$. If $\alpha = e_1e_2\cdots$ is an infinite path, we likewise define $\so(\alpha):=\so(e_1)$ to be the \emph{source} of $\alpha$. We view the elements of $E^0$ as paths of length $0$ (extending $\so$ and $\ran$ to $E^0$ via $\so(v) = v = \ran(v)$, for all $v\in E^0$), and denote by $\pth(E)$ the set of all paths in $E$. We denote the set of all infinite paths in $E$ by $\pth_{\infty}(E)$. If $u,v \in E^0$, and there exists $\alpha \in \pth(E)$ satisfying $\so(\alpha) = u$ and $\ran(\alpha)=v$, then we write $u \geq v$. We say that $E$ is \textit{downward directed} if for all $u,v\in E^0$, there exists $w\in E^0$ such that $u\geq w$ and $v\geq w$. 

\subsection{Leavitt path algebras}

Given a graph $E$ and a commutative ring $R$, the \textit{Leavitt path $R$-algebra} $L_R(E)$ \textit{of $E$} is the $R$-algebra generated by 
\[
\{v \mid v\in E^{0}\} \cup \{e,e^* \mid e\in E^{1}\},
\] 
subject to the following relations (where $\delta$ denotes the Kronecker delta):

{(V)} \ \ \ \  $vw = \delta_{v,w}v$ for all $v,w\in E^{0}$,\\
{(E1)} \ \ \ $\so(e)e=e\ran(e)=e$ for all $e\in E^{1}$,\\
{(E2)} \ \ \ $\ran(e)e^*=e^*\so(e)=e^*$ for all $e\in E^{1}$,\\
{(CK1)} \ $e^*f=\delta _{e,f}\ran(e)$ for all $e,f\in E^{1}$,\\
{(CK2)} \  $v=\sum_{e\in \so^{-1}(v)} ee^*$ for all regular $v\in E^{0}$.   

For all $v \in E^0$, we define $v^*:=v$, and for all paths $\alpha  = e_1 \cdots e_n$ ($e_1, \dots, e_n \in E^1$), we set $\alpha^*:=e_n^* \cdots e_1^*$, $\ran(\alpha^*):=\so(\alpha)$, and $\so(\alpha^*):=\ran(\alpha)$. It is easy to see that every element of $L_R(E)$ can be expressed in the form $\sum_{i=1}^n t_i\alpha_i\beta_i^*$, for some $t_i \in R$ and $\alpha_i,\beta_i \in \pth(E)$.

\subsection{Leavitt path algebras as Steinberg algebras}\label{LPASteinberg}

Given a graph $E$, the set of \emph{boundary paths} is defined as
\[\partial E := \pth_{\infty}(E) \cup \{\gamma \in \pth(E) \mid \ran(\gamma) \text{ is singular}\}.\]
We let
\[\G_E := \{(\alpha \gamma, |\alpha|-|\beta|, \beta \gamma) \mid \alpha, \beta \in \pth(E), \gamma \in \partial E, \text{ and } \ran(\alpha) = \ran(\beta) = \so(\gamma)\}.\]
Given $(\alpha,k,\beta),(\gamma,l,\delta) \in \G_E$, we define $(\alpha,k,\beta)^{-1} := (\beta,-k,\alpha)$ and
\[
(\alpha,k,\beta)(\gamma,l,\delta) := (\alpha,k+l,\delta),
\] 
in case $\beta=\gamma$. It is easy to see that, with these operations, $\G_E$ is a groupoid, where 
\[\G^{(0)}_E = \{(\gamma,0,\gamma) \mid \gamma \in \partial E\},\]
and each $(\alpha,k,\beta) \in \G_E$ is viewed as a morphism with domain $\beta$ and range $\alpha$.

To define a topology on $\G_E$, for all $\alpha,\beta \in \pth(E)$ with $\ran(\alpha) = \ran(\beta)$, and all finite $F \subseteq \so^{-1}(\ran(\alpha))$, let 
\[
Z(\alpha,\beta) := \{(\alpha \gamma,|\alpha|-|\beta|,\beta \gamma) \mid \gamma \in \partial E \text{ and } \ran(\alpha) = \ran(\beta) = \so(\gamma)\}
\]
and
\[
Z(\alpha,\beta,F) := Z(\alpha,\beta) \setminus \bigcup_{\gamma \in F} Z (\alpha\gamma, \beta\gamma).
\]
The sets $Z(\alpha,\beta,F)$ form a base of compact slices for a topology, under which $\G_E$ is a Hausdorff ample groupoid~\cite[Theorem 2.4]{rigby}.

Now, given a graph $E$ and a commutative unital ring $R$, define 
\[
\pi_E : L_R(E) \to A_R(\G_E)
\] 
via 
\begin{equation}\label{LPAiso}
\alpha\beta^* - \sum_{\gamma \in F} \alpha\gamma \gamma^* \beta^* \mapsto \one_{Z(\alpha,\beta, F)},
\end{equation}
for all $\alpha,\beta \in \pth(E)$ with $\ran(\alpha) = \ran(\beta)$, and all finite $F \subseteq \so^{-1}(\ran(\alpha))$. Then $\pi_E$ extends to an $R$-algebra isomorphism~\cite[Theorem 2.7]{rigby}.

\subsection{Maximal commutative subalgebras of Leavitt path algebras}

We are now ready to construct maximal commutative subalgebras of an arbitrary prime Leavitt path algebra. We note that a description of the center $\mathcal{Z}(L_R(E))$ of a Leavitt path algebra $L_R(E)$, which appears in the construction, can be found in~\cite{CenterSteinbergLeavitt}. 

\begin{theorem} \label{LPAtheorem}
Let $R$ be an integral domain, and $E$ a downward directed graph. Suppose that $P_1, P_2 \subseteq \pth(E)$ are nonempty subsets satisfying the following conditions.
\begin{enumerate}[\rm (1)]
\item $P_1 \cap P_2 = \emptyset$.
\item For each $i \in \{1,2\}$, $\alpha \in P_i$, and $\gamma \in \pth(E)$ such that $\ran(\alpha) = \so(\gamma)$, we have $\alpha\gamma \in P_i$.
\item For each $\alpha \in \pth(E)$ such that $\ran(\alpha)$ is singular, $\alpha \in P_1 \cup P_2$.
\item For each $\gamma \in \pth_{\infty}(E)$, there exist $\alpha \in P_1 \cup P_2$ and $\delta \in \pth_{\infty}(E)$ such that $\gamma=\alpha \delta$.
\end{enumerate}
Then
\[\mathcal{Z}(L_R(E)) + \langle \alpha\beta^* \mid \alpha \in P_1, \beta \in P_2 \rangle\]
is a maximal commutative subalgebra of $L_R(E)$, where the second summand is the $R$-subalgebra of $L_R(E)$ generated by the relevant $\alpha\beta^*$.
\end{theorem}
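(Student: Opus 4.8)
The plan is to transport the problem to the Steinberg algebra $A_R(\G_E)$ via the isomorphism $\pi_E$ of Subsection~\ref{LPASteinberg}, build a partition $\G_E^{(0)} = U_1 \cup U_2$ out of $P_1, P_2$, and then invoke Theorem~\ref{steinbergtheorem}. First I would record that $A_R(\G_E) \cong L_R(E)$ is prime: since $R$ is an integral domain and $E$ is downward directed --- equivalently $\G_E$ is topologically transitive --- this follows from the primeness criterion of~\cite{steinberg1}. Because $\pi_E$ is an $R$-algebra isomorphism, it carries $\mathcal{Z}(L_R(E))$ onto $\mathcal{Z}(A_R(\G_E))$ and sends maximal commutative subalgebras to maximal commutative subalgebras, so it suffices to prove the corresponding statement in $A_R(\G_E)$.

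Next I would set up the partition. Identifying $\G_E^{(0)}$ with $\partial E$, put $U_i := \{(\gamma,0,\gamma) \mid \gamma\in\partial E \text{ has a prefix in } P_i\}$ for $i\in\{1,2\}$. Since $\alpha\in P_i$ forces $Z(\alpha,\alpha)\subseteq U_i$ by condition (2), each $U_i = \bigcup_{\alpha\in P_i} Z(\alpha,\alpha)$ is open. Disjointness of $U_1$ and $U_2$ follows from (1) and (2): two prefixes of a common $\gamma$ are comparable, and condition (2) would otherwise push an element of $P_1$ into $P_2$, contradicting (1). Finally, conditions (3) and (4) give $\G_E^{(0)} = U_1 \cup U_2$, since every finite boundary path has singular range (hence lies in $P_1\cup P_2$ by (3)) and every infinite path has a prefix in $P_1\cup P_2$ by (4). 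With the modules $A_{ij}$ as in Definition~\ref{partitiondef} attached to this partition, I would then identify the relevant summand.

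The heart of the argument is the identification $\pi_E\big(\langle \alpha\beta^* \mid \alpha\in P_1,\ \beta\in P_2\rangle\big) = A_{12}$. For $\subseteq$: if $\alpha\in P_1$ and $\beta\in P_2$, then every element of $Z(\alpha,\beta)$ has range with prefix $\alpha$ (so in $U_1$) and source with prefix $\beta$ (so in $U_2$), whence $\pi_E(\alpha\beta^*) = \one_{Z(\alpha,\beta)} \in A_{12}$; as $A_{12}A_{12} = 0$ (Remark~\ref{decompremark}), products of two generators vanish, so the generated subalgebra is just the $R$-span of these generators and its image lies in $A_{12}$. For $\supseteq$ I would take a spanning element $\one_{Z(\alpha,\beta,F)}$ of $A_{12}$, so that $Z(\alpha,\beta,F)\subseteq U_1\G_E U_2$, and show it lies in the span. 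For each boundary path $\gamma$ indexing an element of $Z(\alpha,\beta,F)$, the conditions $\alpha\gamma\in U_1$ and $\beta\gamma\in U_2$, together with (2), produce a finite prefix $\eta_\gamma$ of $\gamma$ with $\alpha\eta_\gamma\in P_1$ and $\beta\eta_\gamma\in P_2$ (taking the longer of the two prefixes witnessing membership in $U_1$ and $U_2$). The cylinders $Z(\alpha\eta_\gamma,\beta\eta_\gamma)$ cover the compact set $Z(\alpha,\beta,F)$; extracting a finite subcover and discarding cylinders contained in others yields a finite partition $Z(\alpha,\beta,F) = \bigsqcup_k Z(\alpha\eta_k,\beta\eta_k)$ into pairwise disjoint cylinders with $\alpha\eta_k\in P_1$ and $\beta\eta_k\in P_2$. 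Hence $\one_{Z(\alpha,\beta,F)} = \sum_k \pi_E\big((\alpha\eta_k)(\beta\eta_k)^*\big)$ lies in the span. The degenerate case where $\alpha\in P_1$ and $\beta\in P_2$ already (so some $\eta_k$ may be empty) is handled directly via the splitting $\one_{Z(\alpha,\beta,F)} = \one_{Z(\alpha,\beta)} - \sum_{e\in F}\one_{Z(\alpha e,\beta e)}$.

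This refinement is the main obstacle: one must check that the per-path prefixes $\eta_\gamma$ can be organized, using compactness of $Z(\alpha,\beta,F)$ and the tree structure of cylinder sets, into a single finite disjoint subdivision lying inside $Z(\alpha,\beta,F)$, which is precisely where conditions (2)--(4) and the avoidance of the edges of $F$ all enter. With the identification established, I would conclude by applying Theorem~\ref{steinbergtheorem} with the roles of $U_1$ and $U_2$ interchanged, so that $A_{12}$ plays the role of the summand $A_{21}$ (the symmetric form of which is noted in Remark~\ref{decompremark}): this shows that $\mathcal{Z}(A_R(\G_E)) + A_{12}$ is a maximal commutative subalgebra of $A_R(\G_E)$. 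Pulling back through $\pi_E$ then yields that $\mathcal{Z}(L_R(E)) + \langle \alpha\beta^* \mid \alpha\in P_1,\ \beta\in P_2\rangle$ is a maximal commutative subalgebra of $L_R(E)$, as desired.
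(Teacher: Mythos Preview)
Your proposal is correct and follows essentially the same route as the paper: transport to $A_R(\G_E)$ via $\pi_E$, define $U_i=\bigcup_{\alpha\in P_i}Z(\alpha,\alpha)$, verify from (1)--(4) that $U_1,U_2$ is an open partition of $\G_E^{(0)}$, invoke primeness via \cite{steinberg1}, apply Theorem~\ref{steinbergtheorem} (with the roles of $U_1,U_2$ swapped so that $A_{12}$ plays the part of $A_{21}$), and identify $\pi_E(S)=A_{12}$. The only noteworthy difference is in the $\supseteq$ direction of that identification: the paper writes an arbitrary compact slice $B\subseteq U_1\G_E U_2$ as a finite union of basic sets $Z(\alpha_i,\beta_i,F_i)$ with $\alpha_i\in P_1$, $\beta_i\in P_2$, then disjointifies and reads off the preimage under $\pi_E$ from the defining formula~\eqref{LPAiso}; you instead start from a single basic set $Z(\alpha,\beta,F)\subseteq U_1\G_E U_2$ and refine it into disjoint cylinders $Z(\alpha\eta_k,\beta\eta_k)$ with $\alpha\eta_k\in P_1$, $\beta\eta_k\in P_2$ by compactness and the tree structure. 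Both are straightforward and amount to the same combinatorics; your explicit handling of the degenerate case $\alpha\in P_1$, $\beta\in P_2$ via $\one_{Z(\alpha,\beta,F)}=\one_{Z(\alpha,\beta)}-\sum_{e\in F}\one_{Z(\alpha e,\beta e)}$ is a clean touch.
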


\begin{proof}
For each $i \in \{1,2\}$, let $U_i = \bigcup_{\alpha \in P_i} Z(\alpha, \alpha) \subseteq \G_{E}^{(0)}$. As unions of basic open sets, $U_1$ and $U_2$ are open. Note that for all $\alpha, \beta \in \pth(E)$, we have 
\[
Z(\alpha, \alpha) \cap Z(\beta, \beta) \neq \emptyset
\] 
if and only if there exists $\gamma \in \pth(E)$, such that $\alpha = \beta\gamma$ or $\beta = \alpha\gamma$. Thus, conditions (1) and (2) imply that $U_1 \cap U_2 = \emptyset$. On the other hand, conditions (3) and (4) imply that $\G_{E}^{(0)} = U_1 \cup U_2$. Now let 
\[
{A_{12}} := \{f \in A_R(\G_{E}) \mid \supp(f) \subseteq U_1\G_{E} U_2 \}.
\]
Our hypotheses on $R$ and $E$ imply that $A_R(\G_{E}) \cong L_R(E)$ is prime, by \cite[Theorem 5.3]{steinberg1}. 
Therefore, by Theorem~\ref{steinbergtheorem}, $\mathcal{Z}(A_R(\G_{E})) + A_{12}$ is a maximal commutative subalgebra of $A_R(\G_{E})$. Let $\pi_E : L_R(E) \to A_R(\G_E)$ be the isomorphism described in~\eqref{LPAiso}. Then $\pi_E (\mathcal{Z}(L_R(E))) = \mathcal{Z}(A_R(\G_{E}))$, and so to conclude the proof it suffices to show that $\pi_E(S) = A_{12}$, where 
\[
S := \langle \alpha\beta^* \mid \alpha \in P_1, \beta \in P_2 \rangle.
\]

We note that 
\[
(\alpha \gamma, |\alpha|-|\beta|, \beta \gamma) \in (\delta, 0, \delta)\G_{E}(\zeta, 0, \zeta)
\]
if and only if $\alpha\gamma = \delta$ and $\beta \gamma = \zeta$, for all $\alpha, \beta \in \pth(E)$ and $\gamma, \delta, \zeta \in \partial E$, such that $\ran(\alpha) = \ran(\beta) = \so(\gamma)$. It follows that, for all $\alpha, \beta \in \pth(E)$ with $\ran(\alpha) = \ran(\beta)$, we have $Z(\alpha,\beta) \subseteq U_1\G_{E} U_2$ if and only if $\alpha \in P_1$ and $\beta \in P_2$. Thus, in particular, $\pi_E(\alpha\beta^*) = \one_{Z(\alpha, \beta)} \in A_{12}$, for all $\alpha \in P_1$ and $\beta \in P_2$, which implies that $\pi_E(S) \subseteq A_{12}$, since $A_{12}$ is an $R$-subalgebra of $A_R(\G_{E})$ (see Remark~\ref{decompremark}). For the opposite inclusion, we note that every element of $A_{12}$ is of the form $\sum_{i=1}^nt_i\one_{B_i}$, for some $t_i \in R$ and $B_i \in \G_{E}^{co}$, such that $B_i \subseteq U_1\G_{E} U_2$. Thus, it suffices to take an arbitrary $B \in \G_{E}^{co}$ satisfying $B \subseteq U_1\G_{E} U_2$, and show that $\one_{B} \in \pi_E(S)$.

Using the standard base for $\G_{E}$ described in Section~\ref{LPASteinberg}, and the fact that $B$ is compact, we have 
\[
B = \bigcup_{i=1}^n Z(\alpha_i,\beta_i,F_i) = \bigcup_{i=1}^n \bigg(Z(\alpha_i,\beta_i) \setminus \bigcup_{\gamma \in F_i} Z(\alpha_i\gamma,\beta_i\gamma)\bigg),
\]
for some appropriate $\alpha_i, \beta_i \in \pth(E)$ and finite $F_i \subseteq \so^{-1}(\ran(\alpha_i))$. Given that $Z(\alpha_i,\beta_i,F_i) \subseteq U_1\G_{E} U_2$, necessarily $\alpha_i \in P_1$ and $\beta_i \in P_2$, for each $i$, as discussed above. Upon taking intersections of the sets $F_i$, as needed, we may assume that the path pairs $(\alpha_i, \beta_i)$ are pairwise distinct.

Now, suppose that 
\[
Z(\alpha_j,\beta_j,F_j) \cap Z(\alpha_k,\beta_k,F_k) \neq \emptyset,
\] 
for some $j,k \in \{1, \dots, n\}$ distinct. Then, upon interchanging the indices if necessary, we must have $\alpha_j = \alpha_k\delta$ and $\beta_j = \beta_k\delta$ for some $\delta \in \so^{-1}(\ran(\alpha_k)) \setminus E^0$. But then letting
\[
F_j' = \{\zeta \in F_j \mid \delta\zeta = \eta\theta \text{ for some } \eta \in F_k, \theta \in \pth(E)\},
\]
we have
\[
Z(\alpha_j,\beta_j,F_j) \cup  Z(\alpha_k,\beta_k,F_k) = Z(\alpha_j,\beta_j, F_j') \cup Z(\alpha_k,\beta_k,F_k \cup \{\delta\}),
\]
and the latter two sets are disjoint. So, upon making such a substitution, we may assume that 
\[
Z(\alpha_j,\beta_j,F_j) \cap Z(\alpha_k,\beta_k,F_k) = \emptyset.
\] 
Using a recursive version of this process, we may assume that the sets $Z(\alpha_i,\beta_i,F_i)$ are pairwise disjoint. Then $\one_{B} = \sum_{i=1}^n \one_{Z(\alpha_i,\beta_i,F_i)}$, and so
\[
\one_{B} = \sum_{i=1}^n \pi_E \bigg( \alpha_i\beta_i^* - \sum_{\gamma \in F_i}\alpha_i\gamma\gamma^*\beta_i^*\bigg) \in \pi_E(S),
\]
as claimed.
\end{proof}

From this result we can deduce a more general version of~\cite[Theorem 3.2]{BvWZ_prime}, which pertains to prime Leavitt path algebras over fields.

\begin{corollary} \label{LPA-cor}
Let $R$ be an integral domain, and $E$ a downward directed graph. Suppose that $E^0 = V_1 \cup V_2$, where the union is disjoint, and let
\[
L_R(V_1,V_2) = \langle \alpha\beta^* \mid \alpha, \beta \in \pth(E) \text{ such that } \so(\alpha) \in V_1, \so(\beta) \in V_2 \rangle.
\]
Then $\mathcal{Z}(L_R(E)) + L_R(V_1,V_2)$ is a maximal commutative subalgebra of $L_R(E)$.
\end{corollary}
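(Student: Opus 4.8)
The plan is to obtain this corollary as an immediate instance of Theorem~\ref{LPAtheorem}, once the vertex partition $E^0 = V_1 \cup V_2$ is translated into a suitable partition of paths. The natural choice is to set
\[
P_i := \{\alpha \in \pth(E) \mid \so(\alpha) \in V_i\}, \qquad i \in \{1,2\}.
\]
With this choice the generators $\alpha\beta^*$ (with $\alpha \in P_1$, $\beta \in P_2$) that appear in Theorem~\ref{LPAtheorem} are exactly the generators $\alpha\beta^*$ (with $\so(\alpha) \in V_1$, $\so(\beta) \in V_2$) defining $L_R(V_1,V_2)$. Hence the two $R$-subalgebras $\langle \alpha\beta^* \mid \alpha \in P_1, \beta \in P_2\rangle$ and $L_R(V_1,V_2)$ literally coincide, and it suffices to verify that $P_1, P_2$ satisfy hypotheses (1)--(4) of Theorem~\ref{LPAtheorem}.

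I would carry out this verification directly. Every path has a well-defined source lying in exactly one of the disjoint sets $V_1, V_2$, which gives both $P_1 \cap P_2 = \emptyset$ (condition (1)) and $P_1 \cup P_2 = \pth(E)$. The latter equality instantly yields conditions (3) and (4): any path is a fortiori in $P_1 \cup P_2$, and any infinite path $\gamma$ factors as $\gamma = \so(\gamma)\,\gamma$ with the length-$0$ path $\so(\gamma) \in P_1 \cup P_2$ and $\gamma \in \pth_{\infty}(E)$. For condition (2) I would use that extending a path on the right leaves its source unchanged: if $\alpha \in P_i$ and $\gamma \in \pth(E)$ with $\ran(\alpha) = \so(\gamma)$, then $\so(\alpha\gamma) = \so(\alpha) \in V_i$, so $\alpha\gamma \in P_i$. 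The one point needing attention is the nonemptiness of $P_1, P_2$ required by the theorem; since $V_i \subseteq P_i$ (viewing vertices as length-$0$ paths), this holds precisely when $V_1$ and $V_2$ are both nonempty, which is the operative hypothesis implicit in the statement (and in the partitions of Definition~\ref{partitiondef} throughout).

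With conditions (1)--(4) in hand, Theorem~\ref{LPAtheorem} applies under the standing assumptions that $R$ is an integral domain and $E$ is downward directed, and concludes that $\mathcal{Z}(L_R(E)) + \langle \alpha\beta^* \mid \alpha \in P_1, \beta \in P_2\rangle$ is a maximal commutative subalgebra of $L_R(E)$; rewriting the second summand as $L_R(V_1,V_2)$ finishes the proof. I do not expect any genuine obstacle here, since all the substantive work has already been absorbed into Theorem~\ref{LPAtheorem}. The only things to get right are the correct translation of a vertex partition into a path partition via the source map, the observation that the resulting generator sets match exactly, and the tacit requirement that both parts of the partition be nonempty.
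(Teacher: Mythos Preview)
Your proposal is correct and follows exactly the same approach as the paper: define $P_i = \{\alpha \in \pth(E) \mid \so(\alpha) \in V_i\}$, check conditions (1)--(4) of Theorem~\ref{LPAtheorem} (the paper simply declares them ``clearly'' satisfied, while you spell out the easy verifications), and identify the resulting generator set with $L_R(V_1,V_2)$. Your observation about the implicit nonemptiness of $V_1$ and $V_2$ is a fair point that the paper leaves tacit.
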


\begin{proof}
For each $i \in \{1,2\}$, let 
\[
P_i = \{\alpha \in \pth(E) \mid \so(\alpha) \in V_i\}.
\] 
Then $P_1$ and $P_2$ clearly satisfy conditions (1)--(4) in Theorem~\ref{LPAtheorem}. Since 
\[
L_R(V_1,V_2) = \langle \alpha\beta^* \mid \alpha \in P_1, \beta \in P_2 \rangle,
\]
the desired conclusion follows from the theorem.
\end{proof}

The above corollary and~\cite[Theorem 3.2]{BvWZ_prime} produce maximal commutative subalgebras of $L_R(E)$ only for downward directed graphs $E$ having at least two vertices. In contrast to this, Theorem~\ref{LPAtheorem} also applies to downward directed graphs with just one vertex, as illustrated in the example below, provided that there are at least two distinct edges. If, however, $E$ is a graph with only one vertex and at most one edge, then $L_R(E)$ is commutative.

\begin{example}
Let $E$ be the graph where $E^0 = \{v\}$ and $E^1 = \{e_1,e_2\}$, pictured below.
\[
\xymatrix{{\bullet}^{v} \ar@(ur,dr)^{e_1} \ar@(dl,ul)^{e_2}}
\]
For each $i \in \{1,2\}$, let
\[
P_i = \{\alpha \in \pth(E) \mid \alpha = e_i\beta \text{ for some } \beta \in \pth(E)\}.
\]
Then $P_1$ and $P_2$ certainly satisfy conditions (1)--(4) in Theorem~\ref{LPAtheorem}. So if $R$ is any integral domain, then
\[
\mathcal{Z}(L_R(E)) + \langle \alpha\beta^* \mid \alpha \in P_1, \beta \in P_2 \rangle = Rv + \langle \alpha\beta^* \mid \alpha \in P_1, \beta \in P_2 \rangle
\]
\[
= Rv + \langle e_1\alpha\beta^*e_2^* \mid \alpha, \beta \in \pth(E)\rangle
\]
is a maximal commutative subalgebra of $L_R(E)$, where $\mathcal{Z}(L_R(E)) = Rv$, by~\cite[Theorem 3.3]{CenterSteinbergLeavitt}.
\end{example}

There are various generalizations of Leavitt path algebras, in the literature, that can also be realized as Steinberg algebras, such as ultragraph Leavitt path algebras~\cite{hazrat-nam} and Kumjian--Pask algebras (of finitely-aligned higher-rank graphs)~\cite{clark-pangalela}. So while Theorem~\ref{steinbergtheorem} can be specialized to these as well, we shall not provide the details here, since the definitions of the algebras in question are quite technical. We note, however, that~\cite[Theorem 4.6]{clark-canto-isfahani} constructs maximal commutative subalgebras of a Kumjian--Pask algebra (over a row-finite higher-rank graph with no sources).

\section{Questions for further research}

Our constructions above produce large classes of maximal commutative subalgebras of Steinberg and Leavitt path algebras. We are, however, still quite far from a classification of such subalgebras in general, or even in interesting special cases.

\begin{question}
Given an integral domain $R$, and reasonable hypotheses on the Hausdorff ample groupoid $\G$, respectively graph $E$, can one classify all the maximal commutative subalgebras of $A_R(\G)$, respectively $L_R(E)$?
\end{question}

Here is a possibly more manageable variant of this question, stated just for Steinberg algebras, though its analogue for Leavitt path algebras, in the notation of Theorem~\ref{LPAtheorem}, would likewise be worth exploring.

\begin{question} \label{ref-q}
Let $\G$ be a Hausdorff ample groupoid, and $R$ an integral domain, such that $A_R(\G)$ is prime. Suppose that $\G^{(0)} = U_1 \cup U_2$ and $\G^{(0)} = U_1' \cup U_2'$ are two different partitions of $\G^{(0)}$ into disjoint open subsets. Under what conditions on $U_1$, $U_2$, $U_1'$, and $U_2'$ are the resulting maximal commutative subalgebras of $A_R(\G)$, as constructed in Theorem~\ref{steinbergtheorem}, isomorphic?
\end{question}

\section*{Acknowledgements}

Anna Cichocka was supported by the Polish National Science Centre grant UMO-2021/41/N/ST1/03067. 

The authors would like to thank the referee for a careful reading of the manuscript and suggesting Question~\ref{ref-q}.

\end{document}